\documentclass[12pt]{amsart}
\usepackage{amsmath}
\usepackage{amsfonts}
\usepackage{amssymb}
\usepackage{amsthm}
\usepackage{verbatim} 

\usepackage{enumerate}
\usepackage{tikz}
\usepackage{color, comment}
\usepackage[colorlinks,citecolor=blue,backref=page, linkcolor=blue]{hyperref}

\usepackage[utf8]{inputenc}
\usepackage[square,sort,comma,numbers]{natbib}
\usepackage{esint}
\usepackage{bbm}
\usepackage{cancel}
\setlength{\bibsep}{5pt}

\usepackage{subcaption}
\usepackage{listofitems}
\usepackage{pgfplots}
\usepackage{mathrsfs}
\usetikzlibrary{arrows}
\usepackage[a4paper,left=35mm,right=35mm,top=35mm,bottom=35mm,marginpar=25mm]{geometry}

\usepackage{import}
\usepackage{xifthen}
\usepackage{pdfpages}
\usepackage{transparent}

\newcommand{%
    
    \import{./}{.pdf_tex}
}[1]{%
    
    \import{./}{#1.pdf_tex}
}

\definecolor{ccqqqq}{rgb}{0.8,0.,0.}
\definecolor{qqqqff}{rgb}{0.,0.,1.}
\definecolor{xfqqff}{rgb}{0.4980392156862745,0.,1.}


\title[Intrinsic Timed Hausdorff and Implications]{Intrinsic Timed-Hausdorff Convergence and Its Implications}

\author[Perales]{Raquel Perales}
\thanks{R. Perales was funded by 
the Austrian Science Fund (FWF) [Grant DOI: 10.55776/EFP6]}
\address[R. Perales]{Math Dept. University of Vienna, Oskar-Morgenstern-Platz 1, 1090 Wien. Centro de Investigaci\'on en Matem\'aticas, 
De Jalisco s/n, Valenciana, Guanajuato, Gto. Mexico. 36023}
\email{raquel.perales@cimat.mx}


\thanks{}

\theoremstyle{plain}

\newtheorem{thm}{Theorem}[section]
\newcommand{\bt}{\begin{thm}}
\newcommand{\et}{\end{thm}}

\newtheorem{cor}[thm]{Corollary}   

\newcommand{\bc}{\begin{cor}}
\newcommand{\ec}{\end{cor}}

\newtheorem{lem}[thm]{Lemma}   

\newcommand{\bl}{\begin{lem}}
\newcommand{\el}{\end{lem}}

\newtheorem{prop}[thm]{Proposition}
\newcommand{\bp}{\begin{prop}}
\newcommand{\ep}{\end{prop}}

\newtheorem{defn}[thm]{Definition}

\newcommand{\ben}{\begin{itemize}}
\newcommand{\een}{\end{itemize}}

\newcommand{\bd}{\begin{defn}}    
\newcommand{\ed}{\end{defn}}

\newtheorem{rmrk}[thm]{Remark}   

\newcommand{\br}{\begin{rmrk}}
\newcommand{\er}{\end{rmrk}}

\newcommand{\be}{\begin{equation}}

\newcommand{\ee}{\end{equation}}

\newcommand{\N}{\mathbb{N}}














\begin{document}

\begin{abstract}  
Sakovich–Sormani introduced several notions of distance between certain classes of Lorentzian manifolds. These distances use the Hausdorff and Gromov–Hausdorff distances and therefore extend naturally to a broader class of spaces. Here we show that, for timed-metric-spaces, intrinsic timed–Hausdorff convergence implies (timeless) Gromov–Hausdorff convergence as well as big bang convergence, among other related implications for future-developed convergence.
\end{abstract}

\maketitle

\section{{\bf{Introduction}}}\label{sec:intro}

Sakovich and Sormani in \cite{SakSor-Notions} introduced several notions of distance between certain classes of Lorentzian manifolds. To do this, they gave a canonical method for converting smooth Lorentzian manifolds satisfying certain hypotheses—known as causally null compactifiable—into compact timed-metric-spaces. This conversion uses the cosmological time function of Anderson–Gallo-
way–Howard \cite{AGH} together with the null distance of Sormani–Vega \cite{SV-null}.

Other approaches to convergence for Lorentzian manifolds were developed by Minguzzi–Suhr \cite{Minguzzi-Suhr-24} and by Mondino–Sämann \cite{Mondino-Saemann-2025}. Minguzzi–Suhr established a compactness theorem for “Lorentzian metric spaces” (which are not metric spaces). In addition, Mondino–Sämann introduced the notion of Lorentzian Gromov–Hausdorff convergence for “Lorentzian pre-length spaces”, using causal diamonds and time separation functions, and proved a precompactness theorem.

In this short note, we prove several relationships between Sormani-Sakovich's distances in the general setting of timed-metric-spaces, c.f. Conjectures 6.1-6.5 in \cite{SakSor-Notions}. No knowledge of Lorentzian geometry is required, but applications to the convergence of Lorentzian manifolds are expected. 
We note that we do not use the compactness and embedding theorem for the intrinsic timed-Hausdorff distance of Che–Perales–Sormani \cite{ChePeSo}, although it could provide alternative proofs of some of the results.

We begin by defining timed-metric-spaces and stating our first result. All remaining relevant definitions can be found in Section \ref{sec:back}.

\begin{defn}\label{defn:timed-metric-space}
A {\bf timed-metric-space}, 
$(X,d,\tau)$, is a metric space, $(X,d)$,
endowed with a $1$-Lipschitz function
$\tau\colon X\to [0,\infty)$,
that we call time function. 
\end{defn}

\begin{thm}[Part of Conjecture 6.1 in \cite{SakSor-Notions}]\label{conj:tK-conv-to-others}
If $(X_j,d_j, \tau_j)$, $j \in \mathbb N$, and $(X_\infty, d_\infty, \tau_\infty)$
are compact timed-metric-spaces
converging in the intrinsic timed-Hausdorff
sense as in Definition \ref{defn:tau-K-dist},
\be 
d_{\tau-H}\Big((X_j,d_j,\tau_j),(X_\infty,d_\infty,\tau_\infty)\Big)\to 0,
\ee
then we have (timeless) Gromov-Hausdorff convergence:
\be\label{eq-timelessGH}
d_{GH}\Big((X_j,d_j),(X_\infty,d_\infty)\Big) \to 0.
\ee
\end{thm}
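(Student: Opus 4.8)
The plan is to reduce the statement to the single pointwise inequality
\be
d_{GH}\big((X_1,d_1),(X_2,d_2)\big)\ \le\ d_{\tau-H}\big((X_1,d_1,\tau_1),(X_2,d_2,\tau_2)\big),
\ee
valid for any pair of compact timed-metric-spaces. Granting this, \eqref{eq-timelessGH} is immediate: apply it with $(X_1,d_1,\tau_1)=(X_j,d_j,\tau_j)$ and $(X_2,d_2,\tau_2)=(X_\infty,d_\infty,\tau_\infty)$ and let the right-hand side tend to $0$ by hypothesis.

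To prove the inequality I would first unwind Definition \ref{defn:tau-K-dist}: the intrinsic timed-Hausdorff distance of two compact timed-metric-spaces is the infimum, over all realizations of them as subsets of a common compact timed-metric-space $(Z,d_Z,\tau_Z)$ --- that is, over all metric spaces $(Z,d_Z)$ carrying a $1$-Lipschitz function $\tau_Z\colon Z\to[0,\infty)$ together with distance-preserving maps $\varphi_i\colon(X_i,d_i)\hookrightarrow(Z,d_Z)$ satisfying $\tau_Z\circ\varphi_i=\tau_i$ --- of the Hausdorff distance $d_H^Z\big(\varphi_1(X_1),\varphi_2(X_2)\big)$ computed inside $(Z,d_Z)$. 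The one thing to observe is that, upon discarding $\tau_Z$, any such datum $(Z,d_Z,\tau_Z,\varphi_1,\varphi_2)$ is precisely a realization of $(X_1,d_1)$ and $(X_2,d_2)$ as subsets of a common metric space, i.e.\ an admissible competitor in the definition of $d_{GH}$. Hence each term of the infimum defining $d_{\tau-H}$ dominates $d_{GH}\big((X_1,d_1),(X_2,d_2)\big)$, and passing to the infimum gives the inequality.

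I do not expect a genuine analytic obstacle here; the step most likely to need care --- the ``hard part'', such as it is --- is purely a matter of bookkeeping: one must check that ``forget the time function'' sends the class of competitors appearing in Definition \ref{defn:tau-K-dist} into the class of competitors for $d_{GH}$, which in turn requires that $d_{\tau-H}$ really is of the ``infimum, over common timed-metric-space realizations, of the Hausdorff distance inside $Z$'' type, rather than being built from some strictly smaller surrogate for that Hausdorff distance. If $d_{GH}$ is instead taken in its correspondence/distortion form the comparison survives up to the harmless factor of $2$, and if $d_{\tau-H}$ is set up via a fixed universal timed-metric-space one invokes the analogous universal-space description of $d_{GH}$.

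Finally, it is worth recording that the implication cannot be reversed: the trivial one-point timed-metric-spaces with constant time functions $0$ and $1$ lie at Gromov--Hausdorff distance $0$ but at intrinsic timed-Hausdorff distance $1$, because any common timed-metric-space realization must keep their two images at distance at least the gap between the time values.
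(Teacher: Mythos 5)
Your proposal is correct and is essentially the paper's argument: both reduce the theorem to the pointwise inequality $d_{GH}\le d_{\tau-H}$ (the paper's Lemma \ref{lem:tK-conv-to-others}) and obtain it by noting that every competitor in the definition of $d_{\tau-H}$ becomes, after forgetting the time data, a competitor for $d_{GH}$. The one discrepancy is that Definition \ref{defn:tau-K-dist} takes the infimum over timed-Fr\'echet maps into the fixed space $\ell^\infty$ rather than over abstract common timed realizations --- exactly the case you hedge for at the end --- and there the inequality is immediate because timed-Fr\'echet maps are already distance preserving into $\ell^\infty$ (the paper instead composes with the map deleting the first coordinate so as to land on genuine Fr\'echet maps, which yields the same bound).
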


There are special classes of timed-metric-spaces in which the time functions have explicit formulas. These include big bang spaces and future-developed spaces. For big bangs, Sormani–Vega \cite{SV-BigBang} and Sakovich–Sormani \cite{SakSor-Notions} defined $BB$–$GH$ convergence, while for future-developed spaces Sakovich–Sormani \cite{SakSor-Notions} introduced $FD$–$HH$ convergence, which can be seen as a particular case of 
the GH convergence for metric pairs defined by Che, Galaz-García, Guijarro, and Membrillo Solis \cite{CGGGMS}. We now state some of the relationships between intrinsic timed–Hausdorff convergence and these other two notions of convergence.

\begin{thm}[Part of Conjectures 6.2 and 6.4 in \cite{SakSor-Notions}]\label{conj:tK-conv-to-BB}
Let $(X_j,d_j, \tau_j)$, $j \in \mathbb N$, and $(X_\infty, d_\infty, \tau_\infty)$
be compact timed-metric-spaces.
\medskip

If the  $(X_j,d_j, \tau_j)$, $j \in \mathbb N$,
are big bang as in Definition~\ref{defn:BB} and 
converge in the intrinsic timed-Hausdorff
sense as in Definition \ref{defn:tau-K-dist},
\be 
d_{\tau-H}\Big((X_j,d_j,\tau_j),(X_\infty,d_\infty,\tau_\infty)\Big)\to 0,
\ee
then 
$(X_\infty, d_\infty, \tau_\infty)$ is a big bang and we have convergence in the BB-GH sense as in Definition \ref{defn:BB-GH},
 \be 
d_{BB-GH}\Big((X_j,d_j,\tau_j),(X_\infty,d_\infty,\tau_\infty)\Big)\to 0.
\ee 
Conversely, if all the $(X_j,d_j, \tau_j)$, $j \in \mathbb N \cup \{\infty\}$,
are big bang and converge in BB-GH sense, 
 \be 
d_{BB-GH}\Big((X_j,d_j,\tau_j),(X_\infty,d_\infty,\tau_\infty)\Big)\to 0
\ee 
then they converge in the intrinsic timed-Hausdorff sense,
\be 
d_{\tau-H}\Big((X_j,d_j,\tau_j),(X_\infty,d_\infty,\tau_\infty)\Big)\to 0.
\ee
\end{thm}

We conclude this section with two results on future-developed spaces.
We recall that Conjecture 6.3 in \cite{SakSor-Notions} asks for a sequence of (smooth) future-developed spaces that converges in the intrinsic timed–Hausdorff sense to a big bang space. Thus, one should not expect a sequence of future-developed spaces that converges in the intrinsic timed–Hausdorff sense to also converge in the FD–HH sense. However, convergence does hold in the more general setting of metric pairs.


\begin{thm}\label{conj:tK-conv-FDm}
Let $(X_j,d_j, \tau_j)$, $j \in \mathbb N$, and $(X_\infty, d_\infty, \tau_\infty)$
be compact timed-metric-spaces
converging in the intrinsic timed-Hausdorff
sense as in Definition \ref{defn:tau-K-dist},
\be 
d_{\tau-H}\Big((X_j,d_j,\tau_j),(X_\infty,d_\infty,\tau_\infty)\Big)\to 0,
\ee
If the  $(X_j,d_j, \tau_j)$, $j \in \mathbb N$,
are future-developed as in Definition~\ref{defn:FD}, then $\tau_\infty^{-1}(0) \neq \emptyset$, 
$\tau_\infty= d_\infty(\tau_\infty^{-1}(0), \,\,)$, and the sequence
$(X_j,\tau_j^{-1}(0))$ converges as metric pairs 
in the Gromov-Hausdorff sense to $(X_\infty,\tau_\infty^{-1}(0))$. 
\end{thm}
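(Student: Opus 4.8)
The plan is to unwind the hypothesis into, for each $j$, a compact metric space $Z_j$ with isometric embeddings $\varphi_j\colon X_j\hookrightarrow Z_j$ and $\psi_j\colon X_\infty\hookrightarrow Z_j$ realizing the intrinsic timed-Hausdorff distance up to some $\varepsilon_j\to 0$: so $d_H^{Z_j}(\varphi_j(X_j),\psi_j(X_\infty))<\varepsilon_j$ and the time functions are $\varepsilon_j$-close, meaning every $p\in X_j$ admits $q\in X_\infty$ with $d_{Z_j}(\varphi_j(p),\psi_j(q))<\varepsilon_j$ and $|\tau_j(p)-\tau_\infty(q)|<\varepsilon_j$, and symmetrically. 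Write $A_j:=\tau_j^{-1}(0)$ and $A_\infty:=\tau_\infty^{-1}(0)$; as preimages of $\{0\}$ under continuous maps on compact spaces these are compact, and $A_j\neq\emptyset$ since $(X_j,d_j,\tau_j)$ is future developed (Definition~\ref{defn:FD}), i.e. $\tau_j=d_j(\,\cdot\,,A_j)$.

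First I would show $(X_\infty,d_\infty,\tau_\infty)$ is future developed. Since $\tau_j$ vanishes somewhere on $A_j$, time-closeness forces $\min\tau_\infty<\varepsilon_j$ for every $j$, hence $\min\tau_\infty=0$ and $A_\infty\neq\emptyset$; as $\tau_\infty$ is $1$-Lipschitz and vanishes on $A_\infty$, the inequality $\tau_\infty\le d_\infty(\,\cdot\,,A_\infty)$ is automatic. For the reverse inequality, fix $x\in X_\infty$, pick $x_j\in X_j$ with $\varphi_j(x_j)$ within $\varepsilon_j$ of $\psi_j(x)$ and $|\tau_j(x_j)-\tau_\infty(x)|<\varepsilon_j$, and then $a_j\in A_j$ with $d_j(x_j,a_j)=\tau_j(x_j)$ (future-developedness of $X_j$); let $b_j\in X_\infty$ be time-close to $a_j$, so $\tau_\infty(b_j)<\varepsilon_j$. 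A triangle-inequality estimate in $Z_j$ gives $d_\infty(x,b_j)<\tau_\infty(x)+3\varepsilon_j$. Passing to a subsequence with $b_j\to b_\infty$ in the compact space $X_\infty$, continuity of $\tau_\infty$ yields $b_\infty\in A_\infty$ and $d_\infty(x,A_\infty)\le d_\infty(x,b_\infty)\le\tau_\infty(x)$, so $\tau_\infty=d_\infty(\,\cdot\,,A_\infty)$.

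Next I would establish $d_H^{Z_j}(\varphi_j(A_j),\psi_j(A_\infty))\to 0$; combined with $d_H^{Z_j}(\varphi_j(X_j),\psi_j(X_\infty))<\varepsilon_j$ this gives that $(X_j,A_j)$ converges to $(X_\infty,A_\infty)$ as metric pairs in the sense of \cite{CGGGMS} (equivalently, $FD$-$HH$ convergence), and in particular re-proves $(X_j,d_j)\GHto(X_\infty,d_\infty)$, cf. Theorem~\ref{conj:tK-conv-to-others}. For one inclusion, given $a_j\in A_j$ pick a time-close $x\in X_\infty$; then $\tau_\infty(x)<\varepsilon_j$, so by the identity just proved there is $a_\infty\in A_\infty$ with $d_\infty(x,a_\infty)=\tau_\infty(x)<\varepsilon_j$, whence $d_{Z_j}(\varphi_j(a_j),\psi_j(a_\infty))<2\varepsilon_j$. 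For the other inclusion, given $a_\infty\in A_\infty$ pick a time-close $x_j\in X_j$; then $\tau_j(x_j)<\varepsilon_j$, so future-developedness of $X_j$ gives $a_j\in A_j$ with $d_j(x_j,a_j)=\tau_j(x_j)<\varepsilon_j$, whence $d_{Z_j}(\psi_j(a_\infty),\varphi_j(a_j))<2\varepsilon_j$. Hence the Hausdorff distance between the zero sets is at most $2\varepsilon_j$, which finishes the proof.

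I expect the main obstacle to be the reverse inequality in the second paragraph: the ambient spaces $Z_j$ vary with $j$, so the approximation must be arranged so that the estimate survives passing to a convergent subsequence, and one must use compactness of $X_\infty$ (to produce the limit point $b_\infty$) and of the $A_j$ (so that the points realizing $d_j(x_j,a_j)=\tau_j(x_j)$ exist). A secondary, bookkeeping-type point is to read off the precise meaning of ``$\varepsilon_j$-close time functions'' from Definition~\ref{defn:tau-K-dist}; with that fixed, the triangle-inequality estimates above are routine.
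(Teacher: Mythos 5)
Your proposal is correct and follows essentially the same route as the paper: extract the Hausdorff-plus-time-closeness data from the timed-Fr\'echet maps, prove $\tau_\infty=d_\infty(\,\cdot\,,\tau_\infty^{-1}(0))$ by pulling the realizing points of $\tau_j$ over to $X_\infty$ and using compactness of $X_\infty$ to produce a limit point in the zero set, then bound the Hausdorff distance between the zero sets by $2\varepsilon_j$ to get convergence of metric pairs. The only differences are cosmetic (you work with abstract ambient spaces $Z_j$ rather than $\ell^\infty$ directly, and you spell out the $2\varepsilon_j$ estimate for the zero sets, which the paper leaves as ``one can show'').
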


\begin{thm}[Part of Conjecture 6.5 in \cite{SakSor-Notions}]\label{conj-FD-to-tK}
Let $(X_j,d_j, \tau_j)$, $j \in \mathbb N$, and $(X_\infty, d_\infty, \tau_\infty)$
be compact future-developed timed-metric-spaces as in Definition~\ref{defn:FD}
with
\be 
d_{FD-HH}\Big((X_j,d_j, \tau_j),(X_\infty,d_\infty,\tau_\infty)\Big)\to 0,
\ee
then
\be 
d_{\tau-H}\Big((X_j,d_j,\tau_j),(X_\infty,d_\infty,\tau_\infty)\Big)\to 0.
\ee
\end{thm}

The structure of the paper is as follows. In Section~\ref{sec:back} we review basic notions and results on the Gromov–Hausdorff distance, followed by the convergence notions for timed-metric-spaces introduced by Sakovich–Sormani. In Section~\ref{sect:relations} we prove the results stated in the introduction.

\tableofcontents

\section{{\bf{Background}}}\label{sec:back}

We review basic notions and results on the Gromov–Hausdorff distance, followed by the convergence notions for timed-metric-spaces introduced by Sakovich–Sormani, closely following \cite{SakSor-Notions}. For further background on metric spaces, we refer the reader to \cite{BBI}.

\subsection{Metric spaces and distances between them}

Given a metric space $(Z,d_Z)$, the {\bf Hausdorff distance in $Z$} between subsets, $X,Y\subset Z$, is defined as
\be
d_H^Z(X,Y)=\inf
\left\{ \varepsilon>0:  \begin{array}{c} \forall x\in X, \exists y\in Y 
\textrm{ s.t. }
d_Z(x,y)<\varepsilon\\
\forall y\in Y, \exists x\in X
\textrm{ s.t. }
d_Z(x,y)<\varepsilon
\end{array}.
\right\}
\ee

For two compact metric spaces $(X, d_X)$ and $(Y,d_Y)$,
the {\bf Gromov-Hausdorff distance} (GH) between them is defined by
\be
d_{GH}((X,d_X),(Y,d_Y))=\inf d_H^Z(\varphi(X),\psi(Y)),
\ee
where the infimum is over all
 distance-preserving maps,
$\varphi:X\to Z$ and $\psi:Y \to Z$, into all possible metric spaces, $Z$.
\medskip

We now focus in a particular class of distance-preserving maps. For that, let  
\[
\ell^\infty=\{(s_1,s_2,\ldots)\,: s_i \in {\mathbb{R}}, \, d_{\ell^\infty}((s_1,s_2,\ldots),(0,0,\ldots))<\infty\}
\]
where   
\[
d_{\ell^\infty}((s_1,s_2,\ldots),(r_1,r_2,\ldots))
=\sup\{|s_i-r_i|\,:\, i\in {\mathbb N}\}.
\]

\begin{defn}\label{defn:Kuratowski}
Given a compact metric space, $(X,d)$, and a countable and dense collection of points $\mathcal N=\{x_1,x_2,\ldots\} \subset X$, the Fr\'echet map of $X$ given by $\mathcal N$,
\[
\kappa_X=\kappa_{X,\mathcal N}: (X,d)\to (\ell^\infty, d_{\ell^\infty}),
\]
is defined as
\be\label{eq:Kuratowski}
\kappa_X(x)=(d(x_1,x),d(x_2,x),\ldots).
\ee
\end{defn}

Fr\'echet maps are distance-preserving, c.f. \cite{SakSor-Notions}. 
Following the notation of Sakovich-Sormani  (see Section 2.6 in \cite{SakSor-Notions}), given  two compact metric spaces $(X,d_X)$ and $(Y,d_Y)$, let 
\be\label{defn:kappa-GH}
d_{\kappa-GH}((X,d_X),(Y,d_Y))
: =\inf d_H^{\ell^\infty}(\kappa_X(X),\kappa_Y(Y))
\ee
where the infimum is taken over all pairs of Fr\'echet maps $\kappa_X$
and $\kappa_Y$. With this notation, the following inequalities hold:
\begin{align}\label{eq:kappa-GH-comp}
d_{GH}\le d_{\kappa-GH}\le 2 d_{GH}.
\end{align}
The first inequality follows right away. The second follows from the next proposition, which is proven in \cite{SakSor-Notions}. 

\begin{prop}\label{prop:2GH}
Let $(X,d_X)$
and $(Y,d_Y)$ be two compact metric spaces.
If 
\be
d_{GH}(X,Y)<R
\ee
then there exist Fr\'echet maps
$\kappa_X:X\to \ell^\infty$
and 
$\kappa_Y:Y\to \ell^\infty$
such that 
\be\label{K-Haus}
d_H^{\ell^\infty}(\kappa_X(X),\kappa_Y(Y))
< 2R.
\ee
More explicitly, there exist
countable dense sets
\[
\{x_1,x_2,\ldots\}\subset X
\textrm{ and }
\{y_1,y_2,\ldots\}\subset Y
\]
such that the  Fr\'echet maps defined by them satisfy \eqref{K-Haus}.
\end{prop}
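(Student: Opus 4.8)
The plan is to make the hypothesis concrete and then build the two dense sequences by an interleaving trick. Since $d_{GH}(X,Y)<R$, there is a metric space $Z$ together with distance preserving maps $\varphi\colon X\to Z$ and $\psi\colon Y\to Z$ for which $r:=d_H^{Z}(\varphi(X),\psi(Y))<R$. I will identify $X$ and $Y$ with their images $\varphi(X),\psi(Y)\subset Z$, so that $d_X=d_Z|_X$, $d_Y=d_Z|_Y$, and all the triangle inequalities below take place inside $Z$. Because $X$ and $Y$ are compact, for every $x\in X$ the infimum $\inf_{y\in Y}d_Z(x,y)$ is attained and is $\le r$, and symmetrically every $y\in Y$ has a point of $X$ at distance $\le r$.

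The key step is to choose countable dense sets on the two sides that are moreover matched in partnered pairs. Fix any countable dense sets $\{p_1,p_2,\dots\}\subset X$ and $\{q_1,q_2,\dots\}\subset Y$. For each $i$ pick $y_i'\in Y$ with $d_Z(p_i,y_i')\le r$, and for each $j$ pick $x_j'\in X$ with $d_Z(q_j,x_j')\le r$. Now interleave: set $x_{2i-1}=p_i$, $y_{2i-1}=y_i'$, $x_{2j}=x_j'$, $y_{2j}=q_j$. Then $\{x_k\}_k\supseteq\{p_i\}_i$ is dense in $X$, $\{y_k\}_k\supseteq\{q_j\}_j$ is dense in $Y$, and by construction $d_Z(x_k,y_k)\le r$ for every $k$. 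Let $\kappa_X$ and $\kappa_Y$ be the Fr\'echet maps determined by $\{x_k\}$ and $\{y_k\}$; these are distance preserving into $\ell^\infty$ and land there since $X,Y$ are bounded ($\sup_k d_X(x_k,x)\le\diam X<\infty$, and likewise for $Y$).

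It remains to bound $d_H^{\ell^\infty}(\kappa_X(X),\kappa_Y(Y))$. Given $x\in X$, pick $y\in Y$ with $d_Z(x,y)\le r$. For each index $k$, two applications of the triangle inequality in $Z$ give
\[
\bigl|\,d_X(x_k,x)-d_Y(y_k,y)\,\bigr|=\bigl|\,d_Z(x_k,x)-d_Z(y_k,y)\,\bigr|\le d_Z(x_k,y_k)+d_Z(x,y)\le 2r,
\]
so $d_{\ell^\infty}(\kappa_X(x),\kappa_Y(y))=\sup_k\bigl|d_X(x_k,x)-d_Y(y_k,y)\bigr|\le 2r<2R$. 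The symmetric argument, starting from $y\in Y$ and choosing a partner $x\in X$, yields the same bound. Hence $d_H^{\ell^\infty}(\kappa_X(X),\kappa_Y(Y))\le 2r<2R$, which is \eqref{K-Haus}, and the dense sets $\{x_k\},\{y_k\}$ are the ones claimed.

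I expect the only genuine subtlety to be the simultaneous density requirement: the auxiliary sequences must be dense in $X$ and $Y$ respectively while remaining paired at distance $\le r$, and the interleaving of a chosen dense set of one space with the partners of a chosen dense set of the other is exactly what reconciles these. Everything else is routine — one should just take care that the closest-point partners exist (this uses compactness) and that the final estimate is strict, which it is because $r<R$.
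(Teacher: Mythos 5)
Your proof is correct and follows essentially the same route as the paper's argument (which it defers to the cited reference but reproduces in full for the timed analogue, Lemma~\ref{prop:2TGH}): choose a near-optimal embedding into a common space $Z$, interleave a dense set of each space with Hausdorff-close partners of a dense set of the other, and then bound each coordinate difference by $2r$ via the triangle inequality. The only cosmetic difference is that you use compactness to get partners at distance $\le r$ exactly, where the paper simply works with strict inequalities $<\varepsilon$; both handle the strictness of the final bound correctly.
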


\bigskip

\subsection{Timed-metric-spaces and distances between them}\label{ssec:backtimed}

For this section recall the definition of timed-metric space, Definition \ref{defn:timed-metric-space}.

\begin{defn}\label{defn:tau-K}
Given a compact timed-metric-space, $(X,d,\tau)$,
and a countably dense collection of points,
${\mathcal N} \subset X$,
we define the
timed-Fr\'echet map, 
\[
\kappa_{\tau,X}=\kappa_{\tau,X,{\mathcal N}}: X \to [0,\tau_{max}]\times \ell^\infty\subset \ell^\infty,
\]
by $\kappa_{\tau,X}=(\tau,\kappa_{X})$, 
where $\kappa_{X}=\kappa_{X, \mathcal N}$ is as in Definition
\ref{defn:Kuratowski} and 
$\tau_{max}=\max_{x \in X}\tau(x)$.
\end{defn}

Timed-Fr\'echet maps are distance-preserving and restricting to them, one can define the intrinsic timed-Hausdorff distance.

\begin{defn}\label{defn:tau-K-dist}
The {\bf intrinsic 
timed-Hausdorff
distance}
between two compact {timed-metric-spaces}, 
$(X_1,d_1,\tau_1)$ and $(X_2,d_2,\tau_2)$, is defined as 
\be \label{eq:tK-GH-1}
d_{\tau-H}
\Big((X_1,d_1,\tau_1),(X_2,d_2,\tau_2)\Big)=
\inf\, d^{\ell^\infty}_H(
\kappa_{\tau_1,X_1}(X_1),
\kappa_{\tau_2,X_2}(X_2))
\ee
where the infimum is taken over all possible 
timed-Fr\'echet maps
$\kappa_{\tau_1,X_1}$ and
$\kappa_{\tau_2,X_2}$.
\end{defn}

\medskip

We now define two special cases of timed-metric-spaces and corresponding intrinsic distances between them. 

\begin{defn} \label{defn:BB}
A compact {\bf timed-metric-space  $(X,d, \tau)$ has a big bang} if 
$\tau^{-1}(0)$ 
consists of a single point, that we denote as
$p_{BB}$ and call it the big bang point, and the time function $\tau$ equals the
distance function to this point:
\be\label{eq:dist-BB}
\tau(p)= d(p,p_{BB})   \qquad \forall p\in X. 
\ee
In this case, we say that $(X,d, \tau)$ is a big bang timed-metric-space or a BB space.  
\end{defn}

Equivalently, we could define
$(X,d, \tau)$ to be a BB space if
there exists a point 
$p_{BB} \in X$, called the big bang point, 
such that the time function $\tau$ is given as in
\eqref{eq:dist-BB}.

\begin{defn}\label{defn:FD}
A {\bf future-developed timed-metric-space}\footnote{The current synthetic definition of a future-developed timed-metric-space is not yet fully satisfactory. In the smooth Lorentzian setting, 
$Y \subset X$ must be a hypersurface (satisfying suitable conditions). In particular, it cannot be a big bang space
and, thus, this definition precisely rules out this case.
},
$(X,d, \tau)$,  consists of a 
timed-metric-space $(X,d, \tau)$ and a non-empty set $Y \subset X$, of cardinality greater than or equal to 2, such that 
\[ 
Y=\tau^{-1}(0) \qquad \text{and} \qquad \tau(\cdot)= d(Y, \cdot).
\]
In this case, we also say that $(X,d, \tau)$ is a FD space.  
\end{defn}

\begin{defn} \label{defn:BB-GH}
The {\bf intrinsic $BB-GH$} distance between two compact
{\bf big bang timed-metric-spaces}
is defined as 
\be
d_{BB-GH}\left(\big(X_1,d_{1}, \tau_1\big),
\big(X_2,d_{2}, \tau_2 \big)\right)=
d_{pt-GH}\left(\big(X_1,d_{1}, p_{BB,1}\big),
\big(X_2,d_{2}, p_{BB,2}\big)\right),
\ee
where $p_{BB,i}=\tau_i^{-1}(0)$.
Here, 
\[
d_{pt-GH}\left(\big(X_1,d_{1}, p_{BB,1}\big) ,
\big(X_2,d_{2}, p_{BB,2}\big)\right)\]
is defined as the infimum 
over all distance-preserving maps $\varphi_i:X_i \to Z$  and metric spaces $Z$
of sums of the form:
\[
d_H^Z\big(\varphi_1(X_1),\varphi_2(X_2)\big)
+ d_Z(\varphi_1(p_{BB,1}),\varphi_2(p_{BB,2})).
\]
\end{defn}

\begin{defn}\label{defn:FD-HH}
The {\bf $FD-HH$ distance} between two compact future-developed space-times is defined as
\be \label{eq:FD-HH}
d_{FD-HH}\Big((X_1,d_1, \tau_1),(X_2,d_2,\tau_2)\Big)=
\ee
 
\be \label{eq:FD-HH-2}
=\inf \Bigg( d_H^Z\big(\varphi_1(X_1),\varphi_2(X_2)\big)
+ 
d_H^Z\big(\varphi_1(Y_1),\varphi_2(Y_2)\big)
\Bigg),
\ee
where the infimum is over all distance-preserving maps $\varphi_i: X_i \to Z$  and over all metric spaces $Z$.
\end{defn}

Finally, we recall that given compact metric spaces $(X_j,d_j)$
and subsets $Y_j \subset X_j$, $j \in \mathbb N \cup \{\infty\}$, it is said that $(X_j, Y_j)$, $j \in \mathbb N$,
converge as metric pairs in the Gromov-Hausdorff to  $(X_\infty,d_\infty)$ sense if 
\be
\inf \Bigg( d_H^Z\big(\varphi_j(X_j),\varphi_\infty(X_\infty)\big)
+ 
d_H^Z\big(\varphi_j(Y_j),\varphi_\infty(Y_\infty)\big)
\Bigg) \to 0 \qquad  \text{as }j \to \infty,
\ee
where the infimimum is taken over all metric  spaces $Z$ and distance-preserving maps $\varphi_j: X_j \to Z$. For additional details, see \cite{CGGGMS, Che-Gomez-pairs}.

\section{{\bf Relationships between the notions of convergence}}
\label{sect:relations}

Here we prove the results stated in the introduction.

\subsection{Proof of Theorem \ref{conj:tK-conv-to-others}}

\begin{lem}\label{lem:tK-conv-to-others}
If $(X_1,d_1,\tau_1)$ and $(X_2, d_2, \tau_2)$ are compact timed-metric-spaces such that their intrinsic timed-Hausdorff distance is bounded above,
\be 
d_{\tau-H}\Big((X_1,d_1,\tau_1),(X_2,d_2,\tau_2)\Big) < \varepsilon,
\ee
then their (timeless) Gromov-Hausdorff distance is also bounded above:
\begin{eqnarray}
d_{GH}\Big((X_1,d_1),(X_2,d_2)\Big) < \varepsilon. 
\end{eqnarray}
\end{lem}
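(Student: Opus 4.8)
The plan is to unwind the definitions and observe that the map $\kappa_{\tau,X} = (\tau, \kappa_X)$ into $[0,\tau_{max}]\times\ell^\infty \subset \ell^\infty$ literally contains a copy of the ordinary Fréchet map $\kappa_X$ as its "tail" coordinates, so a small timed-Hausdorff distance forces a small Hausdorff distance between the images of the plain Fréchet maps, which in turn bounds $d_{\kappa-GH}$ and hence $d_{GH}$ from above via \eqref{eq:kappa-GH-comp}.

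More concretely, suppose $d_{\tau-H}((X_1,d_1,\tau_1),(X_2,d_2,\tau_2)) < \varepsilon$. First I would pick, using the definition of the infimum, timed-Fréchet maps $\kappa_{\tau_1,X_1}$ and $\kappa_{\tau_2,X_2}$ — equivalently, countable dense sets ${\mathcal N}_i \subset X_i$ — with $d_H^{\ell^\infty}(\kappa_{\tau_1,X_1}(X_1),\kappa_{\tau_2,X_2}(X_2)) < \varepsilon$. The key observation is that $\kappa_{\tau_i,X_i}(x) = (\tau_i(x), d_i(x_1^{(i)},x), d_i(x_2^{(i)},x),\ldots)$, and deleting the very first coordinate (the $\tau$-coordinate) is a $1$-Lipschitz "projection" $\pi\colon \ell^\infty \to \ell^\infty$ (in fact distance-nonincreasing, since dropping coordinates can only decrease the supremum) that carries $\kappa_{\tau_i,X_i}$ to the ordinary Fréchet map $\kappa_{X_i} = \kappa_{X_i,{\mathcal N}_i}$. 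Since $1$-Lipschitz maps do not increase Hausdorff distance between images, this gives
\[
d_H^{\ell^\infty}(\kappa_{X_1}(X_1),\kappa_{X_2}(X_2)) \le d_H^{\ell^\infty}(\kappa_{\tau_1,X_1}(X_1),\kappa_{\tau_2,X_2}(X_2)) < \varepsilon.
\]
By the definition \eqref{defn:kappa-GH} of $d_{\kappa-GH}$ as an infimum over all pairs of Fréchet maps, this yields $d_{\kappa-GH}((X_1,d_1),(X_2,d_2)) < \varepsilon$, and then the first inequality in \eqref{eq:kappa-GH-comp}, $d_{GH} \le d_{\kappa-GH}$, finishes the argument.

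There is essentially no hard step here — the whole content is the bookkeeping observation that forgetting the time coordinate is a distance-nonincreasing operation on $\ell^\infty$, together with the already-established comparison $d_{GH}\le d_{\kappa-GH}$. The only mild subtlety is making sure the coordinate-deletion map is correctly identified with a self-map of $\ell^\infty$ and that it does indeed send each timed-Fréchet map to the corresponding ordinary Fréchet map built from the same countable dense set; once that is pinned down, the Hausdorff-distance monotonicity under $1$-Lipschitz maps is routine. The global theorem \thmref{conj:tK-conv-to-others} then follows immediately by applying the lemma with $X_2 = X_\infty$ and letting $\varepsilon \to 0$.
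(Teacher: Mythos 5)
Your proposal is correct and follows essentially the same route as the paper: the paper likewise selects timed-Fr\'echet maps realizing the bound, applies the first-coordinate-deletion map $F\colon\ell^\infty\to\ell^\infty$ to recover the ordinary Fr\'echet maps, and concludes from the resulting Hausdorff bound (the paper goes directly to $d_{GH}$ using that Fr\'echet maps are distance preserving, rather than routing through $d_{\kappa-GH}$, but this is the same observation). No gaps.
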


\begin{proof}
By definition of intrinsic timed-Hausdorff distance, Definition \ref{defn:tau-K-dist},  there exist timed-Fr\'echet maps
$\varphi= \kappa_{\tau_1,X_1}$ and 
$\psi= \kappa_{\tau_2,X_2}$ such that setting $Z=\ell^\infty$, we have
\be\label{eq-est-a}
d^{Z}_H( \varphi (X_1), \psi(X_2)) < \varepsilon.
\ee
Denote by $F: Z \to \ell^\infty$ the map that deletes the first coordinate, i.e.
\[
F(x_1, x_2, x_3,\ldots)=(x_2, x_3,\ldots).
\]
Then, for the Fr\'echet maps $\kappa_{X_1}= F \circ \varphi$ and $ \kappa_{X_2}= F \circ \psi$ by \eqref{eq-est-a}, 
we have 
\[
d^{\ell^\infty}_H( \kappa_{X_1}(X_1), \kappa_{X_2}(X_2)) < \varepsilon.
\]
This implies, recalling the definition of Gromov-Hausdorff distance, that 
\[
d_{GH}
\Big((X_1,d_1),(X_2,d_2)\Big) <  \varepsilon,
\]
which is what we wanted to prove. 
\end{proof}

\bigskip

With the previous lemma at hand we establish our first main result. 

\bigskip

\begin{proof}[{\bf Proof of Theorem \ref{conj:tK-conv-to-others}}]
Assume 
\[
d_{\tau-H}\Big((X_j,d_j,\tau_j),(X_\infty,d_\infty, \tau_\infty)\Big)\to 0.
\]
Then, for all $\varepsilon >0$ there exists $N(\varepsilon) \in \N$ such that for all $j \geq N(\varepsilon)$, 
we have 
\[
d_{\tau-H}\Big((X_j,d_j, \tau_j),(X_\infty,d_\infty, \tau_\infty)\Big) < \varepsilon.
\]
Thus, by applying Lemma \ref{lem:tK-conv-to-others} we obtain
\[
d_{GH}
\Big((X_j,d_j),(X_\infty,d_\infty)\Big) <  \varepsilon,
\]
which implies \eqref{eq-timelessGH} and concludes the proof. 
\end{proof}

\subsection{Proof of first part of  Theorem \ref{conj:tK-conv-to-BB} and Theorem \ref{conj:tK-conv-FDm}}

In this subsection, we give the proofs of two results stated in the introduction, which are very similar.

\begin{proof}[{\bf Proof of first part of Theorem \ref{conj:tK-conv-to-BB}}]
Assume 
\[
d_{\tau-H}\Big((X_j,d_j, \tau_j),(X_\infty,d_\infty, \tau_\infty)\Big)\to 0.
\]
Thus, by definition, for all $\varepsilon >0$ there exists $N(\varepsilon) \in \N$ such that for all $j \geq N(\varepsilon)$, 
we have 
\[
d_{\tau-H}\Big((X_j,d_j, \tau_j),(X_\infty,d_\infty, \tau_\infty)\Big) < \varepsilon.
\]
By definition of intrinsic timed-Hausdorff distance, Definition \ref{defn:tau-K-dist},  there exist 
timed-Fr\'echet maps
$\varphi_j= \kappa_{\tau_j,X_j}$ and 
$\psi_j= \kappa_{\tau_\infty,X_\infty}$
such that for $j\geq N(\varepsilon)$ and setting $Z=\ell^\infty$, we have
\be\label{eq-est-b}
d^{Z}_H( \varphi_j(X_j), \psi_j(X_\infty)) < \varepsilon.
\ee
Note that $\varphi_j$ and 
$\psi_j$ also depend on $\varepsilon$, when necessary and to avoid confusion, we will add a superscript and denote them as $\varphi^\epsilon_j$ and 
$\psi_j^\epsilon$, respectively.

First we show that $\tau_\infty^{-1}(0) \neq \emptyset$.
Fix $j\geq N(\varepsilon)$. By hypotheses, $\tau_j^{-1}(0)$ consists of only one point, which we denote by $p_j$ for brevity. By \eqref{eq-est-b}  there exist $y^\epsilon_{j} \in X_\infty$ such that 
\be
d_Z(\varphi_j^\epsilon(p_{j}), \psi_j^\epsilon(y_{j}^\epsilon)) < \varepsilon.
\ee
 By the previous inequality, and since 
  $\varphi^\epsilon_j(p_j)-\psi_j^\epsilon(y_j^\epsilon)$ has as first entry
$ \tau_j(p_{j}) - \tau_\infty(y^\epsilon_{j})$, 
we see that
\be
\tau_\infty(y^\epsilon_{j}) = | \tau_j(p_{j}) - \tau_\infty(y^\epsilon_{j})|< \varepsilon.
\ee
Hence, taking a sequence of positive numbers $\varepsilon_j \to 0$, we can construct a subsequence of points, that we do not relabel, $y^{\epsilon_j}$ contained in $X_\infty$ such that $\tau_\infty(y^{\epsilon_j}) < \epsilon_j$. Since $X_\infty$ is compact, there exists a point $y \in X_\infty$ such that, up to a subsequence, $y^{\epsilon_j} \to y$, and by continuity of $\tau_\infty$, we get $\tau_\infty(y)=0$.

Now we show that $\tau_\infty^{-1}(0)$ consists of only one point. 
Assume that  $y_1, y_2 \in \tau_\infty^{-1}(0)$.
Fix $j\geq N(\varepsilon)$. By \eqref{eq-est-b}  there exist $x_{j1},x_{j2} \in X_j$ such that 
\be\label{eq-pointsxy} 
d_Z(\varphi_j(x_{ji}), \psi_j(y_{i})) < \varepsilon, \qquad i=1,2.
\ee
Since $y_1, y_2 \in \tau_\infty^{-1}(0)$, by the previous inequality and given that $\varphi_j(x_{ji})-\psi_j(y_i)$ has as first entry
$\tau_j(x_{ji}) - \tau_\infty(y_{i})$,
we see that
\be\label{eq-taus}
\tau_j(x_{ji}) = | \tau_j(x_{ji}) - \tau_\infty(y_{i})|< \varepsilon  \qquad i=1,2.
\ee
By hypotheses $d_j(x_{ji}, p_j)= \tau_j(x_{ji})$. Hence, by the triangle inequality, 
\eqref{eq-pointsxy} and \eqref{eq-taus}:
\begin{align*}
d_{\infty}(y_1, y_2) & = d_Z(\psi_j(y_{1}), \psi_j(y_{2})) \\
& \leq 
d_Z(\psi_j(y_{1}), \varphi_j(x_{j1}))+
d_Z(\varphi_j(x_{j1}), \varphi_j(p_{j}))\\
& \, + d_Z(\varphi_j(p_{j}), \varphi_j(x_{j2}))
+d_Z(\varphi_j(x_{j2}),\psi_j(y_{2}))\\
& < 4\varepsilon.
\end{align*}
Since $\varepsilon>0$ is arbitrary, this shows that $y_1=y_2$. 
Denote by $p_\infty$ the only point in  $\tau_\infty^{-1}(0)$. \bigskip

Now we show that $\tau_\infty= d_\infty(p_\infty, \,\, )$. Fix $j\geq N(\varepsilon)$.
First note that by \eqref{eq-pointsxy} and \eqref{eq-taus}, we have
\begin{align}\label{eq-dist-bbpoints}
d_Z(\psi_j(p_{\infty}),\varphi_j(p_{j}))
& \leq 
d_Z(\psi_j(p_{\infty}),\varphi_j(x_{ji}))
+
d_Z(\varphi_j(x_{ji}),\varphi_j(p_{j})) \notag \\
& =  d_Z( \psi_j(y_i),\varphi_j(x_{ji}))
+ \tau_j(x_{ji}) < 2\varepsilon.
\end{align}

Let $y \in X_\infty$. Take $x_y \in X_j$ such that 
\be\label{eq-yxy}
d_Z(\varphi_j(x_y), \psi_j(y)) < \varepsilon.
\ee
By considering the first coordinate function of $\varphi_j$ and $\psi_j$, we know that 
$|\tau_\infty(y) - \tau_j(x_y) | < \varepsilon$. Furthermore,  
\begin{align*}
 \tau_j(x_y) & =  d_Z(\varphi_j(x_y), \varphi_j(p_j)) \\
& \leq  d_Z(\varphi_j(x_y), \psi_j(y)) +   d_Z(\psi_j(y), \psi_j(p_\infty)) +d_Z(\psi_j(p_\infty), \varphi_j(p_j))   \\
& <   d_Z(\psi_j(y), \psi_j(p_\infty)) + 3 \varepsilon,
\end{align*}
where we used the triangle inequality, \eqref{eq-yxy} and \eqref{eq-dist-bbpoints}.  We can obtain a similar inequality and conclude
that 
\[
|\tau_\infty(y) -  d_Z(\psi_j(y), \psi_j(p_\infty))   | < 3\varepsilon.
\]
Since $\varepsilon>0$ is arbitrary this proves that $\tau_\infty=d_\infty(p_\infty, \,\,)$. Hence, $(X_\infty, d_\infty, \tau_\infty)$ is a BB space. 

\bigskip
We finally show BB-GH convergence. Denote by $F: \ell^\infty \to \ell^\infty$ the map given by  $F(x_1, x_2, x_3,\ldots)=(x_2, x_3,\ldots)$. 
For the Fr\'echet maps $\kappa_{X_j}= F \circ \varphi_j$ and $\kappa_{X_\infty}= F \circ \psi_j$ by \eqref{eq-est-b}
and  \eqref{eq-dist-bbpoints}, 
we have 
\be
d^{\ell^\infty}_H( \kappa_{X_j} (X_j), \kappa_{X_\infty}(X_\infty)) + d^{\ell^\infty} ( \kappa_{X_j} (p_j), \kappa_{X_\infty}(p_\infty)) <  3\varepsilon.
\ee 
Concluding that
\be
d_{BB-GH}\Big((X_j,d_j, \tau_j),(X_\infty,d_\infty,\tau_\infty)\Big) < 3 \varepsilon.
\ee
\end{proof}

\begin{proof}[ \bf{Proof of Theorem \ref{conj:tK-conv-FDm}}]
Assume 
\[
d_{\tau-H}\Big((X_j,d_j, \tau_j),(X_\infty,d_\infty, \tau_\infty)\Big)\to 0.
\]
Thus, by definition, for all $\varepsilon >0$ there exists $N(\varepsilon) \in \N$ such that for all $j \geq N(\varepsilon)$, 
we have 
\[
d_{\tau-H}\Big((X_j,d_j, \tau_j),(X_\infty,d_\infty, \tau_\infty)\Big) < \varepsilon.
\]
By definition of intrinsic timed-Hausdorff distance, Definition \ref{defn:tau-K-dist},  there exist 
timed-Fr\'echet maps
$\varphi_j= \kappa_{\tau_j,X_j}$ and 
$\psi_j= \kappa_{\tau_\infty,X_\infty}$ such that for $j\geq N(\varepsilon)$ and setting $Z=\ell^\infty$, we have
\be\label{eq-est-c}
d^{Z}_H( \varphi_j(X_j), \psi_j(X_\infty)) < \varepsilon.
\ee
Note that $\varphi_j$ and 
$\psi_j$ also depend on $\varepsilon$, when necessary and to avoid confusion, we will add a superscript and denote them as $\varphi^\epsilon_j$ and 
$\psi_j^\epsilon$, respectively. 

First we show that $\tau_\infty^{-1}(0) \neq \emptyset$. 
Fix $j\geq N(\varepsilon)$. By hypotheses, there exists 
$p_j \in \tau_j^{-1}(0)$. By \eqref{eq-est-c}  there exists $y^\epsilon_{j} \in X_\infty$ such that 
\be
d_Z(\varphi_j^\epsilon(p_{j}), \psi_j^\epsilon(y_{j}^\epsilon)) < \varepsilon.
\ee
 By the previous inequality, and since 
  $\varphi^\epsilon_j(p_j)-\psi_j^\epsilon(y_j^\epsilon)$ has as first entry
$ \tau_j(p_{j}) - \tau_\infty(y^\epsilon_{j})$, 
we see that
\be
\tau_\infty(y^\epsilon_{j}) = | \tau_j(p_{j}) - \tau_\infty(y^\epsilon_{j})|< \varepsilon.
\ee
Hence, taking a sequence of positive numbers $\varepsilon_j \to 0$, we can construct a subsequence of points, that we do not relabel, $y^{\epsilon_j}$ contained in $X_\infty$ such that $\tau_\infty(y^{\epsilon_j}) < \epsilon_j$. Since $X_\infty$ is compact, there exists a point $y \in X_\infty$ such that, up to a subsequence, $y^{\epsilon_j} \to y$, and by continuity of $\tau_\infty$, we get $\tau_\infty(y)=0$. 

\bigskip

We now show that $\tau_\infty= d_\infty(Y_\infty, \,\, )$. Fix $j \geq N(\varepsilon)$.
Let $y \in X_\infty$. By \eqref{eq-est-c} we can take $x_y \in X_j$ such that 
\be\label{eq-yxyc}
d_Z(\varphi_j(x_y), \psi_j(y)) < \varepsilon.
\ee
Since $X_j$ is a compact FD space, there exists $p \in Y_j$ such that 
\begin{equation}\label{eq-tauj-achieved}
    \tau_j(x_y)=d_j(Y_j,x_y)=d_j(p,x_y).
\end{equation}
Furthermore, there exists $y_p \in X_\infty$ such that 
\be\label{eq-pypc}
d_Z(\varphi_j(p), \psi_j(y_p)) < \varepsilon.
\ee

Hence, recalling that $\tau_j$ and $\tau_\infty$ are the 
first coordinate functions of $\varphi_j$ and $\psi_j$, respectively, we get 
\begin{align}
\tau_\infty(y_p) & =  |\tau_\infty(y_p)- \tau_j(p)| \\
& \leq  d_Z(\psi_j(y_p),\varphi_j(p)) < \varepsilon.
\end{align}
By \eqref{eq-tauj-achieved}, \eqref{eq-yxyc}, the triangle inequality, \eqref{eq-pypc} and \eqref{eq-tauj-achieved}, 
we get
\begin{align*}
\tau_\infty(y) & \leq  \tau_j(x_y) + d_Z( \psi_j(y), \varphi_j(x_y)) \\
& \leq  d_j(p,x_y) + \varepsilon\\
& \leq \{d_Z(\varphi_j(p), \psi_j(y_p)) +
d_Z(\psi_j(y_p), \psi_j(y)) + d_Z( \psi_j(y), \varphi_j(x_y))\} + \varepsilon\\
& \leq  d_\infty(y_p,y) + 3 \varepsilon.
\end{align*}
Furthermore, 
\begin{align*}
-\tau_\infty(y) & \leq - \tau_j(x_y) + d_Z( \psi_j(y), \varphi_j(x_y)) \\
& \leq  -d_j(p,x_y) + \{ d_Z( \psi_j(y), \psi_j(y_p)) +  d_Z(\psi_j(y_p),\varphi_j(p)) +
d_Z(\varphi_j(p), \psi_j(x_y))\} \\
& \leq  d_\infty(y_p,y) + \varepsilon.
\end{align*}
Hence, 
\[
|\tau_\infty(y) - d_\infty(y_p,y) | \leq 3 \varepsilon.
\]
Since $\varepsilon>0$ is arbitrary, one can construct a sequence of points $y_i \in X_\infty$
such that $\tau_\infty(y_i)< \varepsilon_i$ and 
\[
|\tau_\infty(y) - d_\infty(y_i,y) | \leq 3 \varepsilon_i.
\]
for $\varepsilon_i \searrow 0$.  Since $X_\infty$ is compact, up to a subsequence, $y_i$ converges to $q \in X_{\infty}$ and since $\tau_\infty$ is continuous, $q \in Y_\infty$. Hence, $\tau_\infty(y)= d_\infty(y,q)\geq d_\infty(Y,y)$.
Since $\tau_\infty$ is $1$-Lipschitz, the other inequality follows, 
\[
\tau_{\infty}(y)= \min_{z \in Y_\infty} |\tau_{\infty}(y)- \tau_{\infty}(z)| \leq \min_{z \in Y_\infty}  d_{\infty}(y,z) = d_\infty(Y,y).
\]
Hence,  $\tau_\infty= d_\infty(Y_\infty, \,\, )$.

\bigskip
Denote by $F: \ell^\infty \to \ell^\infty$ the map given by  $F(x_1, x_2, x_3,\ldots)=(x_2, x_3,\ldots)$. For $j \geq N(\varepsilon)$, and the Fr\'echet maps $\kappa_{X_j}= F \circ \varphi_j$ and $\kappa_{X_\infty}= F \circ \psi_j$, by \eqref{eq-est-c}, 
we have 
\be
d^{\ell^\infty}_H( \kappa_{X_j} (X_j), \kappa_{X_\infty}(X_\infty)) <  \varepsilon.
\ee 
Finally, to see that 
\be
d^{\ell^\infty}_H( \kappa_{X_j} (Y_j), \kappa_{X_\infty}(Y_\infty)) <  2\varepsilon,
\ee 
we proceed as follows. Let $p \in Y_j$. By \eqref{eq-est-c}, we can take $y_p \in X_\infty$ such that 
\be
d_Z(\varphi_j(p), \psi_j(y_p)) < \varepsilon.
\ee
Hence, $\tau_\infty(y_p)=|\tau_\infty(y_p) -
\tau_j(p)|  < \varepsilon$. Since 
$X_\infty$ is a compact FD space, there exists $q \in Y_\infty$ such that 
\[
\tau_\infty(y_p)=d_\infty(y_p,q).
\]
Furthermore, there exists $y_p \in X_\infty$ such that 
\be
d_Z(\varphi_j(p), \psi_j(y_p)) < \varepsilon.
\ee
By the triangle inequality, we get
\be
d_Z(\varphi_j(p), \psi_j(q)) < 2\varepsilon.
\ee
Similarly, for each $q \in Y_\infty$ there exists $p \in Y_j$ such that the previous inequality holds. This implies the required estimate and concludes the proof. 
\end{proof}

\subsection{Proof of second part of Theorem \ref{conj:tK-conv-to-BB} and Proof of Theorem \ref{conj-FD-to-tK}}

In this subsection, we give the proofs of two results stated in the introduction, which are very similar.

\begin{lem}\label{lem:2TkGH}
Let $(X_j,d_j, \tau_j)$
and $(X_\infty,d_\infty, \tau_\infty)$ be two compact big bang timed-metric-spaces. Assume that 
\be 
d_{BB-GH}\Big((X_j,d_j,\tau_j),(X_\infty,d_\infty,\tau_\infty)\Big) < \varepsilon.
\ee
Then
\be 
d_{\tau-H}\Big((X_j,d_j,\tau_j),(X_\infty,d_\infty,\tau_\infty)\Big) < 2\varepsilon.
\ee
More explicitly, there exist
countable dense sets $\mathcal N_{X_j}$
and $\mathcal N_{X_\infty}$
such that the  timed-Fr\'echet maps $\kappa_{\tau_j,X_j}$
and $\kappa_{\tau_\infty,X_\infty}$ defined by them satisfy
\be\label{KT-Haus}
d_H^{\ell^\infty}(\kappa_{\tau_j,X_j}(X_j),\kappa_{\tau_\infty, X_\infty}(X_\infty))
< 2\varepsilon.
\ee
\end{lem}

The proof is very similar to the proof of Proposition \ref{prop:2GH}, but we give all the details for completeness. 

\begin{proof}
Assume that 
\be 
d_{BB-GH}\Big((X_j,d_j,\tau_j),(X_\infty,d_\infty,\tau_\infty)\Big) < \varepsilon.
\ee
By definition of $BB-GH$ distance, there exist a compact metric space $Z$ and
isometric embeddings $\varphi_j: X_j \to Z$  and $\varphi_\infty: X_\infty \to Z$
such that 
\be\label{eq-BBeps}
d_H^Z\big(\varphi_j(X_j),\varphi_\infty(X_\infty)\big)
+ d_Z(\varphi_j(p_{BB,j}),\varphi_\infty(p_{BB,\infty})) < \varepsilon.
\ee
So for all $x\in X_j$ there exists $y_x\in X_\infty$
such that
\be\label{eq:y_x-Haus}
d_Z(\varphi_j(x),\varphi_\infty(y_x))<\varepsilon,
\ee
and for all $y\in X_\infty$ there exists $x_y\in X_j$
such that
\be\label{eq:x_y-Haus}
d_Z(\varphi_j(x_y),\varphi_\infty(y))<\varepsilon.
\ee
Let $\{x'_1,x'_2,\dots\}$ and 
$\{y'_1,y'_2,\ldots\}$
be two countably dense collection of points in $X_j$ and $X_\infty$, respectively. Applying \eqref{eq:y_x-Haus} and 
 \eqref{eq:x_y-Haus}, we obtain larger countably and dense collections of points,
\[
\mathcal N_j=\{x_1,x_2,x_3\ldots\}=\{x_1',x_{y_1'},x_2',x_{y_2'},
x_3',x_{y_3'},\ldots\}\subset X_j
\]
and
\[
\mathcal N_\infty=\{y_1,y_2,y_3,\ldots\}=
\{y_{x_1'},y_1',y_{x_2'}, y_2',
y_{x_3'},y_3',\ldots\}\subset X_\infty.
\]
We then define timed-Fr\'echet maps,
$\kappa_{\tau_{X_j}\,X_j}=(\tau_j, \kappa_{X_j})$ and 
$\kappa_{\tau_\infty\,X_\infty}=(\tau_\infty, \kappa_{X_\infty})$,
using these collections.

Given $x \in X_j$ and $y_x \in X_\infty$
that satisfy \eqref{eq:y_x-Haus}, we claim that   
\be\label{eqPrim}
d_{\ell^\infty}(\kappa_{\tau_j\, X_j}(x),\kappa_{\tau_\infty\, X_\infty}(y_x))
<2\varepsilon. 
\ee
To prove the claim, recall that 
\begin{align*}
\kappa_{\tau_j X_j}(x)= & (\tau_{j}(x), d_j(x_1',x),d_j(x_{y_1'},x),d_j(x_2',x),
d_j(x_{y_2'},x),\ldots)\\
\kappa_{\tau_{\infty  X_\infty}}(y_x) = &
(\tau_\infty(y_x), d_\infty(y_{x_1'},y_x),d_\infty(y_1',y_x),
d_\infty(y_{x_2'},y_x), d_\infty(y_2',y_x),\ldots).
\end{align*}
Then by the fact that $\varphi_j$ and 
$\varphi_\infty$ are distance-preserving,
the triangle inequality,  \eqref{eq:y_x-Haus} and the second term to the left of \eqref{eq-BBeps}, we have
\begin{align}\label{eq-tauMaps}
|\tau_j(x) - \tau_\infty(y_x)| & = |d_j(x, p_{BBj}) - d_\infty(y_x, p_{BB\infty})| \\
& \leq d_Z( \varphi_j(x), \varphi_\infty(y_x)) +  d_Z(\varphi_j(p_{BBj}), \varphi_\infty(p_{BB\infty})) \notag\\
& < 2\varepsilon. \notag
\end{align}
Similarly, 
for any $i \in \mathbb N$, 
\begin{align*}
|d_j(x_i',x)-d_\infty(y_{x_i'},y_x)|
& =
|d_Z(\varphi_j(x_i'),\varphi_j(x))-
d_Z(\varphi_\infty(y_{x_i'}),\varphi_\infty(y_x))|\\
& \le 
d_Z(\varphi_j(x_i'),\varphi_\infty(y_{x_i'}))
+ d_Z(\varphi_j(x),\varphi_\infty(y_x)) \\
& <  2\varepsilon, \\
|d_j(x_{y_i'},x)-d_\infty(y_i',y_x)| & <2\varepsilon.
\end{align*}
Hence, \eqref{eqPrim} holds.  Analogously, one can show that  
for any $y \in X_\infty$ there exists $x_y \in X_j$
such that
\be\label{eqPrim2}
d_{\ell^\infty}(\kappa_{\tau_j \, X_j}(x_y),\kappa_{\tau_\infty \, X_\infty}(y))
<2\varepsilon.
\ee
From \eqref{eqPrim} and \eqref{eqPrim2} we conclude that \eqref{KT-Haus} holds.
\end{proof}

\bigskip 

\begin{proof}[{\bf Proof of second part of Theorem \ref{conj:tK-conv-to-BB}}]
Assume that 
\be 
d_{BB-GH}\Big((X_j,d_j,\tau_j),(X_\infty,d_\infty,\tau_\infty)\Big)\to 0,
\ee
Then, for all $\varepsilon >0$ there exists $N(\varepsilon) \in \N$ such that for all $j \geq N(\varepsilon)$, 
we have 
\be 
d_{BB-GH}\Big((X_j,d_j,\tau_j),(X_\infty,d_\infty,\tau_\infty)\Big) < \varepsilon.
\ee
Now by Lemma \ref{lem:2TkGH}, for all $j \geq N(\varepsilon)$ we have
\be 
d_{\tau-H}\Big((X_j,d_j,\tau_j),(X_\infty,d_\infty,\tau_\infty)\Big) < 2\varepsilon,
\ee
which concludes the proof of the theorem. 
\end{proof}

\bigskip

\begin{lem}\label{prop:FD-tK}
Let $(X_j,d_j, \tau_j)$
and $(X_\infty,d_\infty, \tau_\infty)$ be two compact future-developed timed-metric-spaces. Assume that  
\be
d_{FD-HH}\Big((X_j,d_j, \tau_j),(X_\infty,d_\infty, \tau_\infty)\Big) < \varepsilon,
\ee
then
\be 
d_{\tau-H}\Big((X_j,d_j,\tau_j),(X_\infty,d_\infty,\tau_\infty)\Big) < 2\varepsilon.
\ee
More explicitly, there exist
countable dense sets $\mathcal N_{X_j}$
and $\mathcal N_{X_\infty}$
such that the  timed-Fr\'echet maps $\kappa_{\tau_j,X_j}$
and $\kappa_{\tau_\infty,X_\infty}$ defined by them satisfy
\be\label{KT-HausF}
d_H^{\ell^\infty}(\kappa_{\tau_j,X_j}(X_j),\kappa_{\tau_\infty, X_\infty}(X_\infty))
< 2\varepsilon.
\ee
\end{lem}

\begin{proof}
The proof is very similar to the
one of Lemma \ref{lem:2TkGH}. So, we use the same notation as in that proof and only focus in the parts where they differ.

By definition of $FF-DH$, there exist a compact metric space $Z$ and
isometric embeddings, $\varphi_j:X_j \to Z$  and $\varphi_\infty:X_\infty \to Z$,
such that 
\be\label{eq-HFFDDest}
d_H^Z\big(\varphi_j(X_j),\varphi_\infty(X_\infty)\big)
+ d_H^Z\big(\varphi_j(Y_j),\varphi_\infty(Y_\infty)\big) < \varepsilon.
\ee
\bigskip
Construct timed-Fr\'echet maps, $\kappa_{\tau_j\,X_j}=(\tau_j, \kappa_{X_j})$ and 
$\kappa_{\tau_\infty\,X_\infty}=(\tau_\infty, \kappa_{X_\infty})$, 
as in the proof of Lemma \ref{lem:2TkGH}. Given $x \in X_j$ and $y_x \in X_\infty$
that satisfy
\[
d_Z(\varphi_j(x),\varphi_\infty(y_x))<\varepsilon,
\]
we claim that   
\[
d_{\ell^\infty}(\kappa_{\tau_j\, X_j}(x),\kappa_{\tau_\infty\, X_\infty}(y_x))
<2\varepsilon. 
\]
The proof of this claim proceeds as in the proof of Lemma \ref{lem:2TkGH}, except 
for \eqref{eq-tauMaps}. For this, note that $Y_j=\tau_j^{-1}(0) \subset X_j$ is nonempty and compact, so there exists 
$p_x \in Y_j$  such that 
\[
\tau_j(x) = d_j(Y,x)=d_{j}(x, p_{x}).
\]
By the second term to the left of \eqref{eq-HFFDDest} there exist 
$y_{p_x} \in Y_\infty$ such that
\[
d_Z(\varphi_j(p_x), \varphi_\infty(y_{p_x})) < \varepsilon.
\]
Hence,
\begin{align*}
\tau_\infty(y_x) - \tau_j(x)    & =   \tau_\infty(y_x)  - d_j(x, p_{x}) \\
 & \leq  d_\infty(y_x, y_{p_x}) -  d_j(x, p_{x}) \\ 
& \leq   d_Z(\varphi_j(x), \varphi_\infty(y_x)) + d_Z(\varphi_j(p_x), \varphi_\infty(y_{p_x})) < 2 \varepsilon,
\end{align*}
where we used the fact that $\tau_\infty = d(Y_\infty, \,\,)$. 
In a similar way, we get the other inequality and conclude that 
\begin{equation*}
|\tau_j(x) - \tau_\infty(y_x)| < 2\varepsilon. 
\end{equation*}
 This concludes the proof of the claim. 
The remaining part of the proof proceeds as in the proof of the Lemma \ref{lem:2TkGH}. 
\end{proof}

\bigskip 

\begin{proof}[\bf{Proof of Theorem \ref{conj-FD-to-tK}}]
This follows applying Lemma \ref{prop:FD-tK}.
\end{proof}

\bibliographystyle{alpha}
\bibliography{CPS-bib}
\end{document}